\newtheorem{theorem}{Theorem}           
\newtheorem{lemma}{Lemma}               
\newtheorem{corollary}{Corollary}
\theoremstyle{definition}
\newtheorem{definition}{Definition}
\newtheorem{example}{Example}
\newtheorem{remark}{Remark}
\numberwithin{equation}{section}
\begin{document}

\begin{abstract}
      We prove a Leibniz-type inequality for the spread of (real-valued) random variables in terms of their $L^p$-norms.
      The result is motivated by the Kato--Ponce inequality and Rieffel's Leibniz property. 
\end{abstract}

\title{On the Leibniz rule for random variables}
\author{Zolt\'an L\'eka}

\address{Royal Holloway, University of London \\ Egham Hill \\ Egham \\ Surrey \\ TW20 0EX \\ United Kingdom }
\email{zoltan.leka@rhul.ac.uk}

\thanks{This study was supported by the Marie Curie IF Fellowship, Project 'Moments', Grant no. 653943 and by the Hungarian Scientific Research Fund (OTKA) grant
no. K104206.}

\subjclass{Primary 46N30, 60E15 ; Secondary 26A51, 60A99.}
\keywords{Kato--Ponce inequality, Leibniz inequality, Laplacian, majorization, random variable}

\maketitle

\section{Introduction}
 
   For differentiable functions on the real line the Leibniz rule and H\"older's inequality 
   provides us with a simple way to have various estimates of the $L^p$ norms of derivatives of products.
   The Leibniz inequality and its variants have recently appeared in M. Rieffel's fundamental work on the theory of quantum metric spaces, see e.g. \cite{R0, RL, R1, R2}.
   His study was motivated by an urgent need for
   a non-commutative (quantum) analogue of the Gromov--Hausdorff distance between compact metric spaces. 
   In \cite{La2}, \cite{La1} a novel approach to the long-standing problem of 
   finding a proper metric between compact $C^*$-metric spaces has been offered by F. Latr\'{e}moli\`{e}re.
   For a thorough survey on these types of results, we refer the reader to \cite{La}.
   
   We recall that a seminorm $L$ defined on a unital normed algebra $(\mathcal{A}, \|\cdot \| ,{\bf 1}_\mathcal{A})$ 
is strongly Leibniz if
\begin{itemize}
   \item[(i)] $L({\bf 1}_\mathcal{A}) = 0,$
   \item[(ii)](Leibniz inequality) $L(ab) \leq \|a\| L(b) + \|b\| L(a)$ for all $ a,b \in \mathcal{A},$ 
   \item[(iii)] $L(a^{-1}) \leq \|a^{-1}\|^2 L(a)$ whenever  $a \in \mathcal{A}$ is invertible.
\end{itemize}
   One of the simplest (but non-trivial) example of such seminorms is the standard deviation
   defined in ordinary and non-commutative probability spaces as well, see \cite{R2}. 
   Interestingly, several examples of strongly Leibniz seminorms can be defined through derivations
   taking its values in Hilbert bimodules or as commutator norms, see e.g. \cite[Proposition 1.1]{R2}, \cite[Proposition 8]{NW}, \cite[Example 11.5]{R1}.
   
   On the other hand we notice that the Leibniz inequality appears
   in the theory of symmetric Dirichlet forms on real $L^2$ function spaces \cite{BH}, \cite{Fuk}. In fact, 
   it is often used to establish that bounded functions in the domain form an algebra, see e.g. \cite[Corollary 3.3.2]{BH}.
   We just recall that the standard deviation is itself a Dirichlet form over a probability space. 
   
   From the viewpoint of differential operators, Leibniz-type rules have intensively been studied in the theory of non-linear PDEs as well.   
   Let $(-\Delta)^\alpha$ be the fractional Laplace operator defined as the Fourier multiplier
     $$ \widehat{(- \Delta)^\alpha f}(\xi) = |\xi|^{2\alpha} \hat{f}(\xi), \quad \xi \in \mathbb{R}^n, $$
   for any $f$ in the Schwartz space $\mathcal{S}(\mathbb{R}^n).$
   Then the Kato--Ponce inequality or the fractional Leibniz rule in its simplest form asserts the following. 
   Let $1 < r, p_1,q_1,p_2,q_2 < \infty$ such that ${1 \over r} = {1 \over p_1} + {1 \over q_1} = {1 \over p_2} + {1 \over q_2}.$
   Given $0 < \alpha \leq 1,$ for all $f,g \in \mathcal{S}(\mathbb{R}^n),$ one has
        $$ \|(-\Delta)^\alpha (fg)\|_r \leq C( \|g\|_{p_1} \|(-\Delta)^\alpha f\|_{q_1} 
        + \|f\|_{p_2} \|(-\Delta)^\alpha g\|_{q_2}),$$
   where $C = C_{n,\alpha, p_1, q_1, p_2, q_2, r} > 0$ is a constant depending only on $(n,\alpha, p_i, q_i, r).$

   The result and related commutator estimates turned up in the fundamental works of Kato--Ponce \cite{KP}, Christ--Weinstein \cite{CW} and Kenig--Ponce--Vega \cite{KPV}.
   Nowadays the Kato--Ponce inequalities and their extensions have an extensive literature, see e.g. \cite{GO}, \cite{MPTT}.
   For a detailed and thorough exposition of these types of results, we refer the reader to \cite{G} and \cite{MS}.
   A heuristic approach to the inequality in \cite{MPTT} briefly says that 
   if $f$ oscillates more rapidly than $g$, then $g$ is essentially constant with respect to $f$
   and so $(-\Delta)^\alpha(fg)$ behaves like $(-\Delta)^\alpha(f)g.$ The similar statement holds
   if $g$ oscillates more rapidly then $f.$ However, the rigorous proof is based on advanced techniques of harmonic analysis.
   
   
   Leibniz-type rules and related bilinear Poincar\'e--Sobolev inequalities have recently been proved in \cite{BMM}, where the interested reader may find further references.
  
   In the spirit of the Kato--Ponce inequality, our goal is to prove a Leibniz-type rule for random variables and their dispersions 
   around the expected values, but with a strict constant which is surprisingly independent of the H\"older exponents. Namely, the main theorem of the paper is the following.
   
   \begin{theorem}
   Let $(\Omega, \mathcal{F}, \mu)$ be a probability space. For any real $f,g \in L^\infty(\Omega, \mu),$ one has
    $$ \|fg - \mathbb{E}(fg)\|_r \leq \|f\|_{p_1} \|g - \mathbb{E}g \|_{q_1} + \|g\|_{p_2} \|f - \mathbb{E}f \|_{q_2}, $$
    where $1 \leq r,p_1,p_2,q_1,q_2 \leq \infty$ and ${1 \over r} = {1 \over p_1} + {1 \over q_1} = {1 \over p_2} + {1 \over q_2}.$
   \end{theorem}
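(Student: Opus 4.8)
The plan is to peel off a piece of $fg-\mathbb{E}(fg)$ that Hölder handles for free, and to reduce everything to a single sharp bilinear estimate. Writing $u=f-\mathbb{E}f$ and $v=g-\mathbb{E}g$, so that $\mathbb{E}u=\mathbb{E}v=0$ and $\mathrm{Cov}(f,g)=\mathbb{E}(uv)=\mathbb{E}(gu)$, a one-line computation gives the exact identity
\[ fg-\mathbb{E}(fg)=f\,(g-\mathbb{E}g)+\bigl(\mathbb{E}g\cdot u-\mathbb{E}(gu)\bigr). \]
For the first summand Hölder's inequality with $\tfrac1r=\tfrac1{p_1}+\tfrac1{q_1}$ gives $\|f\,(g-\mathbb{E}g)\|_r\le\|f\|_{p_1}\|g-\mathbb{E}g\|_{q_1}$ with constant one. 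Hence, by the triangle inequality, the theorem follows once I prove the Key Lemma: for every mean-zero $u$ and every $g$,
\[ \bigl\|\,\mathbb{E}g\cdot u-\mathbb{E}(gu)\,\bigr\|_r\le\|g\|_{p_2}\,\|u\|_{q_2},\qquad \tfrac1r=\tfrac1{p_2}+\tfrac1{q_2}, \]
applied with $u=f-\mathbb{E}f$. Note the second summand already carries exactly the exponents $p_2,q_2$, so no further symmetrisation is required.

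The Key Lemma is the heart of the matter, and the entire difficulty is the value of the constant. The quantity $T:=\mathbb{E}g\cdot u-\mathbb{E}(gu)$ is secretly bilinear and antisymmetric: introducing an independent copy $(g',u')$ of $(g,u)$ one has $T=\mathbb{E}'\bigl(g'(u-u')\bigr)$, and every naive estimate — bounding this by the triangle inequality, or dualising and estimating $\mathbb{E}g\cdot\psi-\mathbb{E}\psi\cdot g$ term by term — produces a spurious factor $2$, because $\|u-u'\|_{L^{q_2}(\mu\times\mu)}$ can be as large as $2\|u\|_{q_2}$. The content of the lemma is precisely that this factor must not appear: the correct constant comes from the exact cancellation inside $\mathbb{E}g\cdot u-\mathbb{E}(gu)=\mathbb{E}g\cdot u-\mathrm{Cov}(g,u)$, a $2\times2$ determinant (Gram/covariance) structure which in the Hilbertian case is nothing but Cauchy--Schwarz. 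Indeed, for $r=2$ one even has $\|T\|_2\le\|g\|_2\|u\|_2$ via the Pythagorean identity $\|T\|_2^2=(\mathbb{E}g)^2\|u\|_2^2+\mathrm{Cov}(g,u)^2\le\mathbb{E}(g^2)\,\|u\|_2^2$, whence both $L^\infty$-endpoints of the Key Lemma follow at once.

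To obtain the Key Lemma with the sharp constant I would combine duality with bilinear interpolation. Dualising the $L^r$ norm and using that $u$ has mean zero, $\mathbb{E}(\psi T)=\mathbb{E}\bigl(u(\mathbb{E}g\cdot\psi-\mathbb{E}\psi\cdot g)\bigr)=\mathbb{E}\bigl(u(\mathbb{E}g\cdot\psi-\mathbb{E}\psi\cdot g-c)\bigr)$ for every constant $c$; choosing the optimal $c$ (the best constant in $L^{q_2'}$) is exactly the device that discards the factor $2$. I would first settle the two endpoint cases $(p_2,q_2)=(\infty,r)$ and $(p_2,q_2)=(r,\infty)$, where one factor sits in $L^\infty$ and the estimate collapses to the elementary bounds $|\mathbb{E}g|\le\|g\|_\infty$ and $|\mathbb{E}(gu)|=|\mathrm{Cov}(g,u)|\le\|g-\mathbb{E}g\|\,\|u\|$ (for $r=2$ this is the computation above; for general $r$ it still needs the cancellation, e.g. via the reduction of $\|g\|_\infty\le1$ to the extreme points $g=\pm1$). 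The intermediate exponents then follow by complex bilinear interpolation of $(g,u)\mapsto\mathbb{E}g\cdot u-\mathbb{E}(gu)$ between $L^\infty\times L^r$ and $L^r\times L^\infty$ into $L^r$, along $\tfrac1{p_2}=\tfrac{\theta}{r}$, $\tfrac1{q_2}=\tfrac{1-\theta}{r}$, both endpoint constants being one. The two points I would watch most carefully are that the interpolation takes place on the mean-zero subspaces, on which the lemma actually lives, without inflating the constant, and — the genuine crux — that the endpoint constants are sharp, i.e. that the determinant/covariance cancellation is used systematically in place of the triangle inequality; losing it anywhere reintroduces the factor $2$ and destroys the claimed constant $1$.
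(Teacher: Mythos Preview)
Your decomposition
\[
fg-\mathbb{E}(fg)=f\,(g-\mathbb{E}g)+\bigl((\mathbb{E}g)\,u-\mathbb{E}(gu)\bigr),\qquad u=f-\mathbb{E}f,
\]
is correct, and it is a genuinely different splitting from the paper's symmetric one
\(fg-\mathbb{E}(fg)=-\Theta_f(g-\mathbb{E}g)-\Theta_g(f-\mathbb{E}f)\).
The first summand is indeed handled by H\"older with constant~$1$, so everything rests on your Key Lemma
\[
\bigl\|(\mathbb{E}g)\,u-\mathbb{E}(gu)\bigr\|_r\le \|g\|_{p_2}\|u\|_{q_2},\qquad \tfrac1r=\tfrac1{p_2}+\tfrac1{q_2},\ \ \mathbb{E}u=0.
\]

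The gap is that you do not prove this lemma. You establish the case $r=2$ cleanly via $\|T\|_2^2=(\mathbb{E}g)^2\|u\|_2^2+\mathrm{Cov}(g,u)^2\le\|g\|_2^2\|u\|_2^2$, and from this the two endpoints $(p_2,q_2)=(\infty,2)$ and $(2,\infty)$ do follow; bilinear interpolation then gives the full Key Lemma \emph{only for $r=2$}. For general $r$ you merely announce the plan ``settle the endpoints $(\infty,r)$ and $(r,\infty)$, then interpolate'' and explicitly flag the endpoints as ``the genuine crux''. Even after reducing $g$ to sign vectors $\epsilon\in\{-1,1\}^n$, the remaining statement
\[
\bigl\|(\mathbb{E}\epsilon)\,u-\mathbb{E}(\epsilon u)\bigr\|_r\le\|u\|_r\quad\text{for all mean-zero }u
\]
is not obvious; the triangle inequality overshoots by a factor $2$, exactly as you warn, and you give no device to recover the constant $1$. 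The other endpoint $(r,\infty)$ is left even vaguer. So the proposal is a strategy, not a proof: the entire difficulty --- obtaining the sharp constant --- has been relocated into the Key Lemma and left open.

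For comparison, the paper faces the same ``factor $2$'' obstruction and resolves it by a completely different mechanism: it proves the majorization $|(\Theta_x-n^{-1}\mathbf{1}\otimes x)y|\prec_w |x|^\downarrow|y|^\downarrow$ (Lemma~3) through a case analysis on the extreme points of weighted vector $k$-norm balls, and then derives the H\"older-type bound $\|\Theta_x(y-\mathbb{E}y)\|_r\le\|x\|_p\|y-\mathbb{E}y\|_q$ by duality. In fact your Key Lemma is at least as strong as the paper's estimate: since $-\Theta_g u=\tfrac12\bigl(gu+((\mathbb{E}g)u-\mathbb{E}(gu))\bigr)$ for mean-zero $u$, your Key Lemma together with H\"older would immediately imply the paper's inequality~(3.4). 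This makes it unlikely that your endpoints admit a substantially softer proof than the paper's combinatorial argument; some comparable amount of work is unavoidable, and it is precisely this work that is missing.
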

    
    Motivated by Rieffel's work on Leibniz seminorms, particular cases of the above theorem have already been settled in \cite{BeL} and \cite{L}. The proof here is based on discretization. 
    In fact, we approximate the centered quantities by means of special Laplacian matrices and their products with vectors.
    We shall use an elementary decomposition of centered products and, instead of interpolation methods,
    we shall make a careful application of a H\"older-type inequality. Throughout the paper we shall use the concept
    of random variable but the proofs herein are built on convex analysis and majorization theory, and not on probabilistic methods.
  
    Following the parallel between differential operators on products and centered random variables, in the last section of the paper we shall prove a simple 'chain rule' 
    for the $L^p$-norms of compositions with monotone Lipschitz functions. 
   

 \section{Symmetric norms on $\mathbb{R}^n$}
   
    First, we collect a few properties of symmetric norms. Let $x$ be an $n$-dimensional real vector. We write $x^\downarrow$ for the non-increasing rearrangement of $x.$
    We recall that a norm $\|\cdot\|$ on $\mathbb{R}^n$ is symmetric if it is invariant under the permutation of the components and
    their sign changes. For instance, the $\ell_p$ norms given by $\|x\|_p = (\sum_{i=1}^n |x_i|^p)^{1/p}$ $(1 \leq p < \infty)$ and the vector $k$-norms 
    $\|x\|_{(k)} = \sum_{i=1}^k |x_i|^\downarrow$ $(1 \leq k \leq n)$ are symmetric. Basic properties of symmetric norms 
    are the absolute property, $\|x\| = \| |x|\|,$ and the monotonicity, i.e. $\|x\| \leq \|y\|$ if $|x| \leq |y|$ (see \cite[Section 2]{B}).  
    
    We should point out that von Neumann \cite{JvN} proved that there is a one-to-one correspondence between the symmetric norms on $\mathbb{R}^n$ and unitarily invariant norms on the space of $n \times n$ matrices.
    It is also appropriate to mention here that the vector $k$-norms are
    extremal in the following sense: the well--known Ky Fan Dominance Theorem (see \cite{B} or \cite[Chapter 15]{BS}) says that 
    for all symmetric norms $\|\cdot\|$ the inequality $\|x\| \leq \|y\|$ holds if and only if $\|x\|_{(k)} \leq \|y\|_{(k)}$ is satisfied for all
    vector $k$-norms. Note that we may obtain an infinite family of inequalities from a finite one. 
    
    Let us now introduce the concept of weak majorization or submajorization relation denoted by $\prec_w.$ If $x, y$ are real $n$-dimensional vectors then
    $$ x \prec_w y \quad \mbox{ if and only if } \quad \sum_{i=1}^k x_i^\downarrow \leq \sum_{i=1}^k y_i^\downarrow \mbox{ for all } k=1, \hdots, n.  $$  
    Following the previous notations, $|x| \prec_w |y|$ precisely holds if $\|x\|_{(k)} \leq \|y\|_{(k)}$ for all $k = 1, \hdots, n.$
    In order to prove our main theorem, we shall preliminarily need the concept of weighted vector $k$-norms as well. 
    We say that a vector $w = (w_1, w_2, \hdots, w_n)^T \in \mathbb{R}^n$ is a weight vector if its coordinates are positive  
    and decreasingly ordered $w_1 \geq w_2 \geq \hdots \geq w_n > 0.$ Then the weighted vector $k$-norm is given by 
      $$ \|x\|_{(k)}^w = \sum_{i=1}^k w_i |x_i|^\downarrow.$$
    Recently, these norms have appeared in matrix optimization problems related to the Ky Fan $k$-norms \cite{WDST} and
    robust linear optimization with special weights $w_1 = \hdots = w_{n-1} \geq w_n > 0,$ see \cite{BPS}.

    We recall that the dual norm of any norm $\|\cdot\|$ on $\mathbb{R}^n$ can be given by 
    $\|x\|_{*} =  \max $ $\{ \langle x, y \rangle \colon \|y\| \leq 1\},$
    where $\langle \: \cdot \:,  \: \cdot \:  \rangle $ stands for the usual inner product on $\mathbb{R}^n.$ 
    It is simple to see that if $\|\cdot\|$ is symmetric then $\|\cdot\|_*$ has the same property. 
     
    Our first lemma gives an expression for the dual norm of $\|\cdot\|_{(k)}^w.$ The proof here closely follows the proof of \cite[Proposition 2]{BPS}.
      
      \begin{lemma} The dual norm of the weighted vector $k$-norm is given by
        $$ \|x\|_{(k)^*}^w = 
            \max \left\{ {\| x\|_{(1)} \over w_1}, {\| x\|_{(2)} \over w_1 +  w_2}, \hdots, {\|x\|_{(k-1)} \over w_1 + \hdots + w_{k-1} }, {\| x\|_{(n)} \over w_1  + \hdots + w_k} \right \}.$$
      \end{lemma}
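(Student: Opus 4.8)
The plan is to compute the defining maximum of the dual norm directly as the value of a small linear program and then to simplify the resulting list of ratios. First I would exploit the symmetry of the norm. Since $\|\cdot\|_{(k)}^w$ depends on its argument only through the decreasing rearrangement $|\cdot|^\downarrow$, and since the rearrangement inequality gives $\langle x,y\rangle \le \sum_i |x_i||y_i| \le \langle |x|^\downarrow, |y|^\downarrow\rangle$, replacing $y$ by $|y|^\downarrow$ preserves the constraint $\|y\|_{(k)}^w \le 1$ while not decreasing the pairing with $a := |x|^\downarrow$. Hence the maximum defining $\|x\|_{(k)^*}^w$ is attained at a nonnegative, nonincreasing $y$, and it suffices to evaluate
$$ \|x\|_{(k)^*}^w = \max\left\{ \langle a, y\rangle : \sum_{i=1}^k w_i y_i \le 1, \ y_1 \ge y_2 \ge \dots \ge y_n \ge 0 \right\}. $$
The budget constraint forces $y_1 \le 1/w_1$, so the feasible set is compact and the maximum is genuinely attained.

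The key step is a summation-by-parts change of variables that diagonalizes the problem. Setting $s_j := y_j - y_{j+1} \ge 0$ with the convention $y_{n+1} = 0$, one has $y_i = \sum_{j \ge i} s_j$, and this carries the nonincreasing-nonnegative cone bijectively onto the nonnegative orthant $\{ s : s_j \ge 0 \}$. Writing $W_m := w_1 + \dots + w_m$ and collecting coefficients, the objective becomes $\sum_{j=1}^n s_j \|x\|_{(j)}$ while the constraint becomes $\sum_{j=1}^n s_j W_{\min(j,k)} \le 1$. This is now a single-constraint linear program with nonnegative data, whose optimum sits at a coordinate vertex: placing the entire budget on index $j$ yields the value $\|x\|_{(j)}/W_{\min(j,k)}$, so that
$$ \|x\|_{(k)^*}^w = \max_{1 \le j \le n} \frac{\|x\|_{(j)}}{W_{\min(j,k)}}. $$

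It remains only to simplify this list of ratios. For $j \le k$ the denominator equals $W_j$, whereas for $j \ge k$ it is the constant $W_k$; since $\|x\|_{(j)} = \sum_{i=1}^j |x_i|^\downarrow$ is nondecreasing in $j$, every ratio with $j \ge k$ is dominated by the one at $j = n$, namely $\|x\|_{(n)}/W_k$. In particular the term $\|x\|_{(k)}/W_k$ is redundant, and what survives is exactly
$$ \max\left\{ \frac{\|x\|_{(1)}}{w_1}, \dots, \frac{\|x\|_{(k-1)}}{w_1 + \dots + w_{k-1}}, \frac{\|x\|_{(n)}}{w_1 + \dots + w_k} \right\}, $$
which is the claimed formula. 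I expect the only delicate point to be the opening reduction to nonnegative, nonincreasing $y$: it must invoke both the symmetry of $\|\cdot\|_{(k)}^w$ and the rearrangement inequality to justify passing to ordered absolute values without altering the value of the maximum. Everything after the change of variables is a routine optimization over a simplex, and the collapse of the indices $j \ge k$ to the single term $\|x\|_{(n)}/W_k$ is what accounts for the slightly asymmetric shape of the answer.
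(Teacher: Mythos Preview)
Your argument is correct and takes a genuinely different, more elementary route than the paper's. The paper first represents $\|y\|_{(k)}^w$ for $y\ge 0$ as the value of a linear program, invokes LP strong duality to obtain a dual description of the feasible set $\{\|y\|_{(k)}^w\le 1\}$, then embeds this into a second LP for the dual norm and applies strong duality once more to read off the formula. You instead push the rearrangement reduction one step further (to nonnegative \emph{and nonincreasing} $y$, where the constraint becomes simply $\sum_{i\le k} w_i y_i\le 1$), and then the Abel-summation change of variables $s_j=y_j-y_{j+1}$ collapses the problem to a single-constraint LP over the nonnegative orthant, whose optimum is the maximal ratio $\|x\|_{(j)}/W_{\min(j,k)}$ by inspection.

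What each approach buys: the paper's method is a straight transplant of the technique from \cite{BPS} and works as a black box even if one does not notice the monotone-cone structure; your approach is shorter, avoids duality theory entirely, and makes transparent why the indices $j\ge k$ collapse to the single term $\|x\|_{(n)}/W_k$. The only place one might want an extra word in your write-up is a remark that, once $y$ is restricted to the nonincreasing nonnegative cone, the equality $\|y\|_{(k)}^w=\sum_{i=1}^k w_i y_i$ holds on the nose (because $|y|^\downarrow=y$), which is what makes the single linear constraint legitimate.
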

      
      \begin{proof}
         The rearrangement inequality (e.g. see \cite[p. 208]{BS}) and the absolute property of the symmetric norm $\|\cdot\|_{(k)}^w$ imply that
          \begin{align*}
            \|x\|_{(k)^*}^w  &= \max \: \{ \langle |x|, y \rangle \colon {\|y\|_{(k)}^w} \leq 1 \} \\
                             &=  \max \: \{ \langle |x|^\downarrow, y \rangle \colon {\|y\|_{(k)}^w} \leq 1 \mbox{ and } 0 \leq  y_1, \hdots, y_n \}. 
          \end{align*} 
          
         On the other hand, it is simple to see that $\|y\|_{(k)}^w$ $(0 \leq y \in \mathbb{R}^n)$ can be written as the optimal solution of the
         linear program:
           \begin{align*}
             & \max \sum_{i=1}^n u_i y_i   \\
             & \mbox{ s.t. } \sum_{i=1}^j u_i \leq \sum_{i=1}^{\min(j,k)} w_i \hspace{.4cm} j = 1, \hdots, n,  \\
             & \hspace{1cm} u_j \geq 0, \hspace{1.4cm} j = 1, \hdots, n. 
           \end{align*}
          From the strong duality of LP problems we may find that
            \begin{align*} 
            \|y\|_{(k)}^w  =& \; \min \sum_{i=1}^n \sum_{j=1}^{\min(i,k)} s_i w_j  \\
             & \mbox{ s.t. }  \sum_{i=j}^n s_i \geq y_j \hspace{.4cm} j = 1, \hdots, n, \\
             &  \hspace{1cm} s_j \geq 0, \hspace{.4cm} j = 1, \hdots, n. 
            \end{align*}
          Thus  $\|y\|_{(k)}^w \leq 1$ ($y \in \mathbb{R}_+^n$) if and only if 
            \begin{align*}
            \sum_{i=1}^n \sum_{j=1}^{\min(i,k)} s_i w_j &\leq 1 \mbox{ and }\sum_{i=j}^n s_i \geq y_j, s_j \geq 0, \: j = 1, \hdots, n 
            \end{align*} 
          is feasible. 
          Hence we get 
            \begin{align*}
             \|x\|_{(k)^*}^w =& \max \sum_{i=1}^n |x_i|^\downarrow y_i \\ 
             & \mbox{ s.t. }  \sum_{i=1}^n \sum_{j=1}^{\min(i,k)} s_i w_j \leq 1, \\
             &  \hspace{1cm} \sum_{i=j}^n s_i \geq y_j \hspace{1cm} j = 1, \hdots, n \\
             &  \hspace{1cm} s_j, y_j \geq 0, \hspace{1cm} j = 1, \hdots, n. 
           \end{align*} 
          Applying again the strong duality of LP problems, one has
            \begin{align*}
             \|x\|_{{(k)}^*}^w =& \min \theta \\ 
             & \mbox{ s.t. } \sum_{i=1}^{\min(j,k)} \theta w_i - \sum_{i=1}^j \alpha_i \geq 0,  \hspace{.6cm} j = 1, \hdots, n, \\
             &  \hspace{1cm} \alpha_j \geq |x_j|^\downarrow \hspace{2.2cm} j = 1, \hdots, n, \\
             &  \hspace{1cm} \theta \geq 0, \alpha_j \geq 0, \hspace{1.6cm} j = 1, \hdots, n. 
           \end{align*} 
           Therefore, the equality
            $$  \|x\|_{{(k)}^*}^w  =  \max \left\{ {\| x\|_{(1)} \over w_1}, {\| x\|_{(2)} \over w_1 +  w_2}, \hdots, {\|x\|_{(k-1)} \over w_1 + \hdots + w_{k-1} }, {\| x\|_{(n)} \over w_1  + \hdots + w_k} \right \}$$ 
            immediately follows.
      \end{proof}
      
      Throughout the paper let $e_i$ stand for the $i$th standard basis element of $\mathbb{R}^n.$
      Now we may get some description of the extreme points of the unit ball $\mathfrak{B}_{(k)}^w := \{ x \in \mathbb{R}^n \colon \|x\|_{(k)}^w \leq 1 \}.$ 
      These points will be denoted by ${\rm ext }  \: \mathfrak{B}_{(k)}^w.$ 
      
      \begin{lemma}
         For the extreme points of $\mathfrak{B}_{(k)}^w,$ one has
          $${\rm ext } \:  \mathfrak{B}_{(k)}^w 
            \subseteq \left\{ \sum_{i \in S} {\pm e_i \over \sum_{j=1}^{\min(k,|S|)} w_j }  \colon S \subseteq [n] \mbox{ and } 1 \leq |S| \leq {k-1} \mbox{ or } |S| = n\right\}.$$
      \end{lemma}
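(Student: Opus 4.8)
The plan is to derive the statement from Lemma 1 by passing to polar bodies, exploiting that the index set $\{1,\dots,k-1\}\cup\{n\}$ appearing in the assertion is precisely the one occurring in the dual norm. Writing $W_j=w_1+\dots+w_j$, Lemma 1 reads
$$\|x\|_{(k)^*}^w=\max\left(\max_{1\le j\le k-1}\frac{\|x\|_{(j)}}{W_j},\ \frac{\|x\|_{(n)}}{W_k}\right),$$
a maximum of normalized Ky Fan norms. Denoting by $\mathfrak B_{(j)}=\{x:\|x\|_{(j)}\le 1\}$ the Ky Fan unit balls, this is the same as
$$\mathfrak B_{(k)^*}^w=\left(\bigcap_{j=1}^{k-1}W_j\mathfrak B_{(j)}\right)\cap W_k\mathfrak B_{(n)}.$$

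First I would take polars. Since $\mathfrak B_{(k)}^w=(\mathfrak B_{(k)^*}^w)^\circ$ by the bipolar theorem, and since the polar of an intersection of $0$-symmetric convex bodies is the convex hull of the union of the polars, together with $(tC)^\circ=t^{-1}C^\circ$, I obtain
$$\mathfrak B_{(k)}^w=\mathrm{conv}\left(\bigcup_{j=1}^{k-1}\frac{1}{W_j}\mathfrak B_{(j)}^\circ\ \cup\ \frac{1}{W_k}\mathfrak B_{(n)}^\circ\right),$$
a convex hull of finitely many polytopes. By Milman's theorem every extreme point of this hull lies in one of the constituent polytopes, and an extreme point of the hull lying in a subpolytope is automatically extreme there; hence
$$\mathrm{ext}\,\mathfrak B_{(k)}^w\subseteq\bigcup_{j=1}^{k-1}\frac{1}{W_j}\,\mathrm{ext}\,\mathfrak B_{(j)}^\circ\ \cup\ \frac{1}{W_k}\,\mathrm{ext}\,\mathfrak B_{(n)}^\circ.$$

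The remaining step is to compute the extreme points of the polar Ky Fan balls. Since $\mathfrak B_{(j)}^\circ$ is the unit ball of the dual Ky Fan norm $\max(\|u\|_\infty,\tfrac1j\|u\|_1)$, it equals $\{u:\|u\|_\infty\le 1,\ \|u\|_1\le j\}$. I would argue that each extreme point $u$ has all entries in $\{0,\pm1\}$: if $0<|u_i|<1$ for some $i$, then because $\|u\|_1$ must equal the integer $j$ a second fractional entry exists, and shifting mass between the two traces out a segment in the ball through $u$, contradicting extremality. Once the entries lie in $\{0,\pm1\}$, the condition $\|u\|_1=j$ forces exactly $j$ nonzero entries, so $\mathrm{ext}\,\mathfrak B_{(j)}^\circ\subseteq\{\sum_{i\in S}\pm e_i:|S|=j\}$ (for $j=n$ this is just $\mathfrak B_{(n)}^\circ=[-1,1]^n$ with vertices $\{\pm1\}^n$). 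Substituting back, the index $j\le k-1$ contributes $\frac1{W_j}\{\sum_{i\in S}\pm e_i:|S|=j\}$ and the index $n$ contributes $\frac1{W_k}\{\sum_{i\in S}\pm e_i:|S|=n\}$; since $\sum_{i=1}^{\min(k,|S|)}w_i$ equals $W_{|S|}$ when $|S|\le k-1$ and $W_k$ when $|S|=n$, these two families are exactly the set in the statement.

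I expect the main obstacle to be the polarity bookkeeping: justifying carefully that the polar of the intersection is the convex hull of the scaled polars, with the correct factors $1/W_j$ and no loss under closure (which is automatic here, all bodies being polytopes), together with the clean determination of $\mathrm{ext}\,\mathfrak B_{(j)}^\circ$. Everything after that is a matter of matching the denominators $W_j,\ W_k$ with the quantity $\sum_{i=1}^{\min(k,|S|)}w_i$ in the claimed description.
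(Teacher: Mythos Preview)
Your argument is correct and follows essentially the same route as the paper: both proofs start from the dual-norm formula of Lemma~1, pass through polarity and the Bipolar Theorem, and conclude that $\mathfrak{B}_{(k)}^w$ is the convex hull of the set $\mathcal{C}$ on the right-hand side, whence $\mathrm{ext}\,\mathfrak{B}_{(k)}^w\subseteq\mathcal{C}$. The only organizational difference is that the paper verifies $\mathcal{C}^\circ=\mathfrak{B}_{(k)^*}^w$ directly via the identity $\|x\|_{(j)}=\max\{|\langle x,y\rangle|:y\in\{0,\pm1\}^n,\ \|y\|_1=j\}$, whereas you unpack this into the polar-of-intersection formula together with an explicit determination of $\mathrm{ext}\,\mathfrak{B}_{(j)}^\circ$; the content is the same.
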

      
      \begin{proof}
         Let $\mathcal{C}$ stand for the points of the right-hand side in the above inclusion, and let us consider its convex
         hull $\mathfrak{B}_\mathcal{C} = \mbox{ conv } \mathcal{C}.$
         Then it is immediate that ${\rm ext } \:  \mathfrak{B}_{\mathcal{C}} \subseteq \mathcal{C}.$ 
         Furthermore, we get
          $$ \mathfrak{B}_{(k)^*}^w = \{ x \in \mathbb{R}^n \colon |\langle x,y\rangle| \leq 1 \mbox{ for } \|y\|_{(k)}^w \leq 1 \} = \{ x \in \mathbb{R}^n \colon |\langle x,y\rangle| \leq 1 \mbox{ for } y \in \mathcal{C} \}.$$
         Indeed, the last inclusion $\supseteq$ is clear. On the other hand, for any $1 \leq j \leq n,$  one has that 
         $\|x\|_{(j)} = \max \{ |\langle x, y \rangle| \colon y_i \in \{0, \pm 1\} \mbox{ and } \|y\|_1 = j \},$  hence Lemma 1 implies the inclusion $\subseteq$ in the last equality. Thus the polar of $\mathfrak{B}_\mathcal{C}$
         equals the unit ball $\mathfrak{B}_{(k)^*}^w.$ The Bipolar Theorem \cite[Theorem 5.5]{BS} now gives that $\mathfrak{B}_\mathcal{C} =  \mathfrak{B}_{(k)}^w,$
         so ${\rm ext } \:  \mathfrak{B}_{(k)}^w \subseteq \mathcal{C}.$
       \end{proof}
      
      The expression in Lemma 2 might be a bit misleading if $k=1.$ Then the set of the right-hand side is given by the case $|S| = n.$
      We also remark that it might happen that the right-hand side of the above inclusion is strictly larger 
      than ${\rm ext } \;  \mathfrak{B}_{(k)}^w.$
      Indeed, if $w$ is the constant $1$ vector, then it is simple to show that
      \begin{equation}
         {\rm ext } \:  \mathfrak{B}_{(k)}^1 =  {1 \over k} \{-1,1\}^n \cup \{\pm e_i\} \qquad (1 < k < n).
      \end{equation}
      Actually, this now follows from the fact that the dual norm of the vector $k$-norm is max$\displaystyle \Bigl(\|\cdot\|_\infty, {\|\cdot\|_1 \over k} \Bigr).$
      
      Applying a completely different method, one can precisely describe the extreme points 
      of the unit ball in $\mathbb{R}^n$ with respect to the weighted vector $k$-norms \cite[Theorem 3.1]{CL} (called $c$-norms in \cite{CL}).
      However, for our purposes the inclusion of Lemma 2 is enough. 
      
      \section{Proof of Theorem 1}
       
     Let $x$ be an $n$-dimensional vector. We introduce an $n \times n$ symmetric matrix $\Theta_x$ associated to $x$ with zero row and column sums  
     defined by
     $$(\Theta_x)_{ij}  = \begin{cases}
                              {1 \over 2n} (x_i + x_j)  & \mbox{if } i \neq j \\
                               - \sum_{k : k \neq i } (\Theta_x)_{ik}  &  \mbox{if } i  = j. 
                              \end{cases}
     $$   
     For any vectors $x$ and $y,$ let $x \otimes y$ denote the rank-$1$ matrix $xy^T.$
     We will need the following simple but useful lemma.
      
    \begin{lemma} Let us consider two vectors $x$ and $y$ in $\mathbb{R}^n.$ For any symmetric norm $\|\cdot\|$ on $\mathbb{R}^n,$ we have     
      $$\|( \Theta_x - n^{-1} {\bf 1}  \otimes x)y \| \leq \| |x|^\downarrow |y|^\downarrow\|. $$ 
    \end{lemma}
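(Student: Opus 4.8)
The plan is to remove the quantifier over all symmetric norms with the Ky Fan Dominance Theorem and then verify the resulting weak--majorization statement by duality, after cutting the problem down to the extreme points produced by Lemma 2. Write $M=\Theta_x-n^{-1}\mathbf 1\otimes x$ and let $xy$ denote the entrywise product. By Ky Fan it is enough to show $My\prec_w|x|^\downarrow|y|^\downarrow$, that is $\|My\|_{(k)}\le\sum_{i=1}^{k}x^\downarrow_iy^\downarrow_i$ for every $k$ (using that the entrywise product of two non-increasing non-negative vectors is again non-increasing). The right-hand side is precisely the weighted vector $k$-norm $\|y\|_{(k)}^{w}$ with $w=x^\downarrow$; after a harmless perturbation making the entries of $x$ distinct and non-zero (both sides are continuous in $x$), $w$ is an admissible weight and $\|\cdot\|_{(k)}^{w}$ is a genuine norm. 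Since $y\mapsto\|My\|_{(k)}$ is convex, the bound $\|My\|_{(k)}\le\|y\|_{(k)}^{w}$ for all $y$ is equivalent to $\|Me\|_{(k)}\le1$ for every extreme point $e$ of $\mathfrak{B}_{(k)}^{w}$. By Lemma 2 such an $e$ equals $\frac1W\sum_{i\in S}\pm e_i$ with $W=\sum_{j=1}^{\min(k,|S|)}x^\downarrow_j$, so the whole lemma reduces to proving, for every sign vector $s=\sum_{i\in S}\pm e_i$ with $|S|=m$,
\[
\|Ms\|_{(k)}\le\sum_{j=1}^{\min(k,m)}x^\downarrow_j .
\]

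For this reduced inequality I would dualize the vector $k$-norm, $\|Ms\|_{(k)}=\max\{\langle Ms,z\rangle:z\in\{0,\pm1\}^n,\ \|z\|_1\le k\}$, and exploit the elementary decomposition $My=-xy-\Theta_y x$, which one checks directly from the definition of $\Theta$. Using the symmetry of $\Theta_s$ this gives
\[
\langle Ms,z\rangle=-\langle x,\,sz+\Theta_s z\rangle=-\langle x,w\rangle,\qquad w:=sz+\Theta_s z .
\]
Expanding the diagonal of $\Theta_s$ puts $w$ into the averaged form $w_j=\frac1{2n}\sum_{i=1}^{n}B_{ij}$ with $B_{ij}=s_jz_j-s_iz_j+s_jz_i+s_iz_i$, and this averaged representation is the source of all the needed cancellation.

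The crux is then two size estimates for $w$. Grouping $B_{ij}=s_j(z_j+z_i)+s_i(z_i-z_j)$ and using $|z_i\pm z_j|\le 2\max(|z_i|,|z_j|)\le2$ yields $|B_{ij}|\le2$, hence $\|w\|_\infty\le1$. For the $\ell_1$-bound I would take $t=\operatorname{sgn}w\in\{\pm1\}^n$ so that $\|w\|_1=\langle w,t\rangle$; pairing $t$ back against $z$ gives $\langle w,t\rangle=\langle z,\,st+\Theta_s t\rangle\le\|z\|_1\,\|st+\Theta_s t\|_\infty\le k$, since $st+\Theta_s t$ has the same averaged shape and so is again bounded by $1$ in sup-norm, while the symmetric second pairing against $s$ (using linearity of $\Theta$ in its subscript and its symmetry) produces $\langle w,t\rangle=\langle s,\rho\rangle$ for a vector $\rho$ of the same shape with $\|\rho\|_\infty\le1$, whence $\le m$; together $\|w\|_1\le\min(k,m)$. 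Finally the rearrangement inequality and a fractional-knapsack estimate, using $0\le|w|^\downarrow_j\le1$ together with $\sum_j|w|^\downarrow_j\le\min(k,m)$ and the monotonicity of $x^\downarrow$, give $-\langle x,w\rangle\le\langle x^\downarrow,|w|^\downarrow\rangle\le\sum_{j=1}^{\min(k,m)}x^\downarrow_j$, the required bound. I expect the estimate $\|w\|_1\le\min(k,m)$ to be the main obstacle: the naive triangle inequality overshoots by a factor of two, and it is exactly the symmetric double use of the averaged representation that recovers the missing cancellation.
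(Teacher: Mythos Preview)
Your outline is correct and can be completed, but it takes a genuinely different route from the paper after the common opening. Both proofs invoke Ky Fan dominance and then, via convexity and Lemma~2, reduce to showing $\|Ms\|_{(k)}\le \|x\|_{(\min(k,m))}$ for sign vectors $s$ with $m$ nonzero entries. From there the paper performs a \emph{second} extreme-point reduction, now in $x$ over the ball $\mathfrak B^1_{(\min(l,k))}$, and finishes with a three-case computation (Cases (a)--(c)). You instead dualize $\|\cdot\|_{(k)}$, use the algebraic identity $My=-xy-\Theta_y x$ to write $\langle Ms,z\rangle=-\langle x,w\rangle$ with $w=sz+\Theta_s z$, and then bound $-\langle x,w\rangle$ by $\sum_{j\le\min(k,m)}|x|^\downarrow_j$ through the pair of estimates $\|w\|_\infty\le1$ and $\|w\|_1\le\min(k,m)$ together with rearrangement. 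This avoids the second convex reduction and all case analysis; it also proves the reduced inequality for every $m$, not just $m\le k-1$ or $m=n$, so the precise cardinality restriction in Lemma~2 is not needed.

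Two points deserve tightening. First, throughout you write $x^\downarrow$ where $|x|^\downarrow$ is required: the weight vector for $\|\cdot\|_{(k)}^w$, the normalization $W$, and the final knapsack bound all involve $|x|$, not $x$. Second, your assertion that the vector $\rho$ arising from the pairing against $s$ is ``of the same shape'' as $w$ is not literally true; one finds $\rho_j=\tfrac12 z_jt_j+\tfrac{t_j}{2n}\langle\mathbf 1,z\rangle+\tfrac{z_j}{2n}\langle\mathbf 1,t\rangle-\tfrac1{2n}\langle z,t\rangle$, which differs from $w$'s formula. What \emph{is} true, and suffices, is that $\rho_j=\tfrac1{2n}\sum_i C_{ij}$ with $C_{ij}=t_j(z_i+z_j)+t_i(z_j-z_i)$, and the same grouping trick you used for $B_{ij}$ gives $|C_{ij}|\le |z_i+z_j|+|z_i-z_j|=2\max(|z_i|,|z_j|)\le2$, hence $\|\rho\|_\infty\le1$. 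With these corrections your argument goes through cleanly.
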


    \begin{proof}
       Relying upon the Ky Fan dominance theorem, it is sufficient to prove that
        $$ |(\Theta_x - n^{-1} {\bf 1}  \otimes x)y| \prec_w |x|^\downarrow |y|^\downarrow;$$
       that is, the statement of the lemma holds with the vector $k$-norms.
       
       Now fix $k.$ By a continuity argument, we may assume that the components of $x$ are nonzero.
       Since we are maximizing a convex function over a compact convex polytope, 
       we have 
       $$ \| ( \Theta_x - n^{-1} {\bf 1}  \otimes x)v \|_{(k)} \leq  \| |x|^\downarrow |v|^\downarrow\|_{(k)} = \|v\|_{(k)}^{|x|^\downarrow} \quad  \mbox{ for all } v \in \mathbb{R}^n$$ if and only if 
       $$ \| ( \Theta_x - n^{-1} {\bf 1}  \otimes x)v \|_{(k)} \leq  1 \quad \mbox{ for all } v \in \mbox{ext } \mathfrak{B}_{(k)}^{|x|^\downarrow}.$$
       In view of Lemma 2, this is equivalent to show for all $l \in \{1, \hdots, k-1\} \cup \{n\}$ that 
       $$ \| ( \Theta_x - n^{-1} {\bf 1}  \otimes x)v \|_{(k)} \leq  \|x\|_{(\min(l,k))}$$
       holds for $v \in \{0 , \pm 1\}^n, \|v\|_1 = l .$ Fix $l.$ By homogeneity, we might have $\|x\|_{(\min(l,k))} = 1.$
       If we consider the previous inequality as a convex optimization problem,  
       we need to prove that the objective value of  
       \begin{equation}
                \max_v \; \| ( \Theta_x - n^{-1} {\bf 1}  \otimes x)v \|_{(k)} \quad \mbox{s.t. } v \in \{0 , \pm 1\}^n, \|v\|_1 = l,
       \end{equation}
       is less than or equal to $1.$ (Of course, if $k=1$ then $l=n$ must hold.)
      
       Next, note that the objective value of (3.1) is convex in $x$ if $x$ 
       varies on any face of the unit ball $\mathfrak{B}_{\min(l,k)}^1.$ The extreme points of any face
       are readily extreme points of the unit ball.
       Hence, in view of (2.1), it turns out that it is sufficient to solve (3.1) in the next cases.  
      
       \noindent {\bf Case (a)} $1 \leq l \leq k-1$ and $x = \pm e_{i_0}.$ Let us assume that $v_{i_0} = 1$ (if $v_{i_0} = -1$ the proof is similar).
       Then
       \begin{align*}
          \| ( \Theta_x - n^{-1} {\bf 1}  \otimes x)v \|_{(k)} &\leq \| ( \Theta_x - n^{-1} {\bf 1}  \otimes x)v \|_{(n)} \\
                                                    &=  {1 \over 2n} \sum_{i=1}^n \Bigl| \sum_{j=1}^n ((x_i - x_j)v_j - (x_i+x_j)v_i) \Bigr|\\
                                                    &= {1 \over 2n} \Biggl((n+1) - \sum_{j \neq i_0} v_j\Biggr) +  {1 \over 2n} \sum_{i \neq i_0} (1 + v_i) \\
                                                    &=1.
       \end{align*}
       If $v_{i_0} = 0,$ we get 
        \begin{align*}
          \| ( \Theta_x - n^{-1} {\bf 1}  \otimes x)v \|_{(k)} &\leq  {1 \over 2n} \sum_{i=1}^n \Bigl| \sum_{j=1}^n ((x_i - x_j)v_j - (x_i+x_j)v_i) \Bigr|\\
                                                    &= {1 \over 2n} \Bigl| \sum_{j \neq i_0} v_j\Bigr| +  {1 \over 2n} \sum_{j \neq i_0} |v_j| \\
                                                    &\leq 1.
       \end{align*}
      
      \noindent {\bf Case (b)}  $1 \leq l \leq k-1$ and $\displaystyle x \in {1\over l} \{+1,-1\}^n.$
      Clearly, we do not decrease the objective value of (3.1) if we allow $v$ to be any vector such that $\|v\|_1 = l$.
      Hence it is enough to show that
        \begin{align*}
             \max \; \{ \| ( \Theta_x - n^{-1} {\bf 1}  \otimes x)v \|_{(k)} \colon x \in \{-1 , 1\}^n, v \in \{\pm e_1, \hdots, \pm e_n \} \} = 1.
          \end{align*}
      Notice that the above expression now is independent of $l.$ 
      Let $i_0$ denote the index of the only non-zero component of $v,$ and let $S_{x,-1}, S_{x,+1}$ be the support sets of $x$ taking values $-1$ and $+1,$
      respectively. We can assume that $x_{i_0} = 1.$ Then 
      \begin{align*}
              \| ( \Theta_x - n^{-1} {\bf 1} \otimes x)v \|_{(k)}  &\leq \| ( \Theta_x - n^{-1} {\bf 1}  \otimes x)v \|_{(n)} \\
                                                        &= {1 \over 2n} \sum_{i=1}^n \Bigl| \sum_{j=1}^n ((x_i - x_j)v_j - (x_i+x_j)v_i) \Bigr|\\
                                                        &= {1 \over 2n} \sum_{j \in S_{x,+1}} (x_{i_0} + x_j) + {1 \over 2n} \sum_{j \in S_{x,-1}} (x_{i_0} - x_j)  \\
                                                        &= 1.
          \end{align*}
          
      \noindent {\bf Case (c)}  $l = n.$ We need to check that
        \begin{align}
             \max \; \{ \| ( \Theta_x - n^{-1} {\bf 1}  \otimes x)v \|_{(k)} \colon v \in \{\pm 1 \}^n \} = \|x\|_{(k)}.
          \end{align}
      Looking again at the extreme points of $\mathfrak{B}_{(k)}^1,$ we may assume that  $x = e_{i_0}$ or $x \in {1 \over  k} \{\pm 1\}^n.$
      In the latter case, with $\tilde{x} = kx,$     
      \begin{align*}
              {1 \over k} \|(\Theta_{\tilde{x}} - n^{-1} {\bf 1}  \otimes \tilde{x})v \|_{(k)}  &\leq \| ( \Theta_{\tilde{x}} - n^{-1} {\bf 1}  \otimes \tilde{x})v \|_\infty \\
                                                        &= \max_{1 \leq i \leq n} {1 \over 2n}  \Bigl| \sum_{j=1}^n ((\tilde{x}_i - \tilde{x}_j)v_j - (\tilde{x}_i+\tilde{x}_j)v_i)\Bigr|\\
                                                        &\leq 1,
          \end{align*}          
      because $(\tilde{x}_i-\tilde{x}_j)(\tilde{x}_i+\tilde{x}_j) = 0$ and $|\tilde{x}_i| = |\tilde{x}_j| = 1.$ 
      
      Lastly, if $x = e_{i_0}$ one can apply an argument similar to Case (b). 
      Let $S_{v_{i_0},-1}, S_{v_{i_0},+1}$ be the support set of $v$ taking values $-v_{i_0}$ and $v_{i_0},$
      respectively. Then 
      \begin{align*}
              \| ( \Theta_x - n^{-1} {\bf 1}  \otimes x)v \|_{(k)}  &\leq \| ( \Theta_x - n^{-1} {\bf 1}  \otimes x)v \|_{(n)} \\
                                                        &= {1 \over 2n} \sum_{i=1}^n \Bigl| \sum_{j=1}^n ((v_j - v_i)x_i - (v_i+v_j)x_j) \Bigr|\\
                                                        &= {1 \over 2n} \left| \left(\sum_{j =1}^n (v_j - v_{i_0}) \right) - 2 v_{i_0} \right| + {1 \over 2n} \sum_{i \neq i_0} |v_{i_0} + v_i|  \\
                                                        &\leq {1 \over 2n} \sum_{j =1}^n |v_j - v_{i_0}| + {2 \over 2n} + {1 \over 2n} \sum_{i \neq i_0} |v_{i_0} + v_i| \\
                                                        &= {1 \over n} (|S_{v_{i_0},-1}| + 1 + |S_{v_{i_0},+1}| -1 ) = 1.
          \end{align*}

      Therefore we may conclude that the optimum value of (3.1) is at most $1$ as desired, and the proof is complete. 
      \end{proof}
  
  \begin{proof}[Proof of Theorem 1] First, assume that $\mu$ is the uniform probability measure on $[n] = \{1, \hdots, n\}.$
   It is simple to see that 
       $$ fg - \mathbb{E}(fg) = {1 \over n} \left[\sum_{j=1}^n (f_i g_i - f_j g_j) \right]_{i=1}^n.$$
    Next, we observe that 
  $$ f_i g_i - f_j g_j  = {1 \over 2}(f_i + f_j)(g_i - g_j) +  {1 \over 2}(g_i+g_j)(f_i - f_j).$$
    Hence
     \begin{equation}
        fg - \mathbb{E}(fg) = - \Theta_f g  - \Theta_g f =  - \Theta_f (g - \mathbb{E}g) - \Theta_g (f - \mathbb{E}f),
      \end{equation}
    where the last equality follows from the fact $\Theta_f 1 = \Theta_g 1 = 0.$ 
    
   Let us consider two vectors $x$ and $y$ in $\mathbb{R}^n.$ We claim that the following version of H\"older's inequality holds
     \begin{equation}
        \|\Theta_x (y - \mathbb{E}y)\|_r \leq \|x\|_{p} \|y- \mathbb{E}y\|_q, 
     \end{equation}
   where $ \displaystyle  {1 \over r} = {1 \over p} + {1 \over q  }$ and $r,p,q \geq 1.$
   One then applies the decomposition (3.3) and the proof is straightforward.      
   
   To prove (3.4), let $\mathfrak{X}_0$ denote the subspace of $n$-dimensional vectors of zero mean value:
   $$\mathfrak{X}_0 = \{ x \in \mathbb{R}^n \colon \sum_{i=1}^n x_i = 0\}.$$
   We denote by $q^*$ (and $r^*$) the conjugate exponent of $q$ (and $r$, resp.), as usual.   
   We now verify that the norm of the operator $\Theta_x \colon (\mathfrak{X}_0, \|\cdot\|_q) \rightarrow (\mathbb{R}^n, \|\cdot\|_r)$ is at most $\|x\|_p.$
   By a duality argument, the adjoint  
   \begin{align*}
            \Theta_x^* \colon (\mathbb{R}^n, \|\cdot\|_{r^*}) &\rightarrow (  \mathbb{R}^n/ \mathbb{R}, \|\cdot\|_{q^*}) \\
                        y &\mapsto \Theta_x y + \mathbb{R}
        \end{align*}
   has the same norm as $\Theta_x$ (see e.g. \cite[Proposition 2.3.10]{Pe}), and $1/p + 1/r^* = 1/q^*$ holds.  
   To get an estimate in the quotient space $(\mathbb{R}^n/\mathbb{R}, \|\cdot\|_{q^*}),$ define the constant
     $$ \lambda_y = {1 \over n} \langle x, y\rangle $$
     for any $y \in \mathbb{R}^n.$ 
    Clearly, $\Theta_x y - \lambda_y {\bf 1} = (\Theta_x - n^{-1} {\bf 1}  \otimes x)y. $ 
    Now let us apply Lemma 3 to the symmetric norm $\|\cdot\| = \|\cdot\|_{q^*}$ and H\"older's inequality to conclude
     \begin{align*}
        \inf_{\lambda \in \mathbb{R}} \| \Theta_x y - \lambda {\bf 1} \|_{q^*} \leq \| \Theta_x y - \lambda_y {\bf 1} \|_{q^*}  = \|( \Theta_x - n^{-1} {\bf 1}  \otimes x)y \|_{q^*} &\leq \| |x|^\downarrow |y|^\downarrow\|_{q^*} \\ 
                                             &\leq \|x\|_p \|y\|_{r^*} .
      \end{align*}
    Hence the claim readily follows and the uniform case is proved.
     
    If we have a general probability space $(\Omega, \mathcal{F}, \mu),$ applying a uniform approximation by simple functions, we can
    reduce the problem to finite state spaces. Let us now use the approximation method in \cite[Proposition 2.1]{BeL}, and the finite discrete case
    reduces to the uniform one. We briefly recall the trick. Let $\mu_\mathbb{Q}$ be any probability measure on an $n$-points state space $S_n$ such that 
    every atom carries a rational probability. Let us write $(\mu_\mathbb{Q})_i = r_i/m,$ where $r_i, m \in \mathbb{N},$ for all $1 \leq i \leq n.$  
    The map
      $$\Phi \colon (x_1, \hdots, x_n) \mapsto (\underbrace{x_1, \hdots, x_1}_{r_1}, \hdots, \underbrace{x_n, \hdots, x_n}_{r_n}) $$ 
         is an injective algebra homomorphism from $\mathbb{R}^n$ into $\mathbb{R}^m.$  Furthermore, if $\lambda$ denotes 
         the uniform probability measure on the $m$-points space, we have 
         $\|x\|_{\ell^p(\mu_\mathbb{Q})} = \|\Phi(x)\|_{\ell^p(\lambda)}$ 
         and $\|xy - \mathbb{E}_{\mu_\mathbb{Q}} (xy)\|_{\ell^p(\mu_\mathbb{Q})} = \|\Phi(x)\Phi(y) - \mathbb{E}_\lambda (\Phi(x)\Phi(y)) \|_{\ell^p(\lambda)}$ in 
         the weighted $\ell^p$ spaces for any $x, y \in \mathbb{R}^n$ and $1 \leq p \leq \infty.$ Since any probability measure on the $n$-points space $S_n$ can be approximated by 
         rational probability measures, the proof is complete.
         
    \end{proof}
    
    \begin{remark}
          Let $L$ be the $n\times n$ Laplacian matrix defined by $L = n^{-1 } {\bf 1} \otimes {\bf 1} - I.$
          Then $-Lf = f - \mathbb{E}_\lambda f,$ where $\lambda$ is the uniform probability measure on $[n].$
          Let us consider the derivation $\partial \colon \ell^2_n(\lambda) \rightarrow \ell^2_n(\lambda) \otimes \ell^2_n(\lambda) = \ell^2_{n \times n}(\lambda \otimes \lambda)$ given by
           $$ (\partial f)_{ij} = {f_i - f_j \over \sqrt{2}},$$
          and let the left and right actions be $f (a_{ij}) = (f_i a_{ij})$ and $(a_{ij}) f = (f_j a_{ij})$ on $\ell^2_{n \times n}(\lambda \otimes \lambda).$
          It is easy to see that 
           $$ -L = \partial^* \partial$$
          and from the Leibniz rule that
           \begin{align*}
             -L(fg) = \partial^*(f \partial g) + \partial^*( (\partial f) g).
           \end{align*}
          Moreover, a little computation gives that $\partial^*(f \partial g) = -\Theta_f g$ and $\partial^*((\partial f) g) = -\Theta_g f.$
          Hence we get back decomposition (3.3) through the derivation $\partial$. 
          
          We also note that 
           $$ \partial^*(f \partial g) = -{1 \over 2} (L(fg) - gLf + fLg)$$ holds.
    \end{remark}

  \section{On the chain rule for random variables}

   In this section, we will prove a 'chain rule' for the $L^p$ norm of compositions of bounded random variables with Lipschitz functions.
   
   Let us introduce the fundamental concept of discrete Laplacians. We recall that an $n \times n$ symmetric matrix $L$
   is {\it Laplacian} if it has zero row and column sums
   and all of its off-diagonals are non-negative. A straightforward corollary of the definition is that $-L$ is positive semi-definite. 
  
  Our first proposition gives a conditional norm estimate of $L.$ The proof is an application of 
  the Calder\'on--Mityagin interpolation (see \cite{C}, \cite{Mi} or \cite[Theorem 15.17]{BS}).
  
\begin{theorem} Let $L$ be an $n \times n$ Laplacian.
    For any symmetric norm $\|\cdot\|$ on $\mathbb{R}^n$ and $x \in \mathfrak{X}_0,$ we have 
     $$ \|L x\| \leq n (\max_{i \neq j}  L_{ij})  \|x\| . $$
\end{theorem}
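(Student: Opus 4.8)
The plan is to reduce the estimate to a statement about operator norms on $\ell^1_n$ and $\ell^\infty_n$ and then invoke the interpolation property of symmetric norms. The freedom I would exploit is that on the mean-zero subspace the all-ones matrix acts trivially: writing $J = {\bf 1} \otimes {\bf 1}$, one has $Jx = {\bf 1}\bigl(\sum_i x_i\bigr) = 0$ for every $x \in \mathfrak{X}_0$. Hence, setting $c = \max_{i \neq j} L_{ij}$ (we may assume $L \neq 0$, so $c > 0$; otherwise both sides vanish) and
\[
  M = L - c\,J,
\]
we get $Mx = Lx$ for all $x \in \mathfrak{X}_0$, so it is enough to bound $\|Mx\|$. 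Subtracting exactly $c$ from every entry is the crucial choice, and I expect it to be the only non-formal step.

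Next I would compute the $\ell^1$- and $\ell^\infty$-operator norms of $M$. The matrix $M$ is symmetric, since $L$ and $J$ are. Put $d_i = -L_{ii} = \sum_{j \neq i} L_{ij} \geq 0$, using that $L$ has zero row sums. For $j \neq i$ the off-diagonal entry $M_{ij} = L_{ij} - c \leq 0$ (because $0 \leq L_{ij} \leq c$), so $|M_{ij}| = c - L_{ij}$; and $M_{ii} = -d_i - c < 0$, so $|M_{ii}| = d_i + c$. Consequently the absolute row sums telescope:
\[
  \sum_{j=1}^n |M_{ij}| = (d_i + c) + \bigl((n-1)c - d_i\bigr) = nc \qquad (1 \leq i \leq n).
\]
Therefore $\|M\|_{\ell^\infty_n \to \ell^\infty_n} = nc$, and by the symmetry of $M$ the absolute column sums equal the row sums, so $\|M\|_{\ell^1_n \to \ell^1_n} = nc$ as well.

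Finally I would conclude by interpolation. The matrix $(nc)^{-1}M$ is a contraction on both $\ell^1_n$ and $\ell^\infty_n$; by the Calder\'on--Mityagin theorem, symmetric norms are exact interpolation norms for the couple $(\ell^1_n, \ell^\infty_n)$ (equivalently $|Mx| \prec_w nc\,|x|$, whence Ky Fan dominance applies), so $(nc)^{-1}M$ is a contraction for every symmetric norm. Thus for $x \in \mathfrak{X}_0$,
\[
  \|Lx\| = \|Mx\| \leq nc\,\|x\| = n\bigl(\max_{i \neq j} L_{ij}\bigr)\|x\|,
\]
as claimed. The hard part is entirely in the choice of $M$: the shift $-c\,J$ is forced by the requirement that the corrected matrix be contractive on $\ell^1$ and $\ell^\infty$ after scaling, and it is sharp --- equality holds for the complete-graph Laplacian $L = c\,(J - nI)$ restricted to $\mathfrak{X}_0$, where $Lx = -ncx$.
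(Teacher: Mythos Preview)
Your proof is correct and follows the same overall strategy as the paper: subtract a rank-one matrix from $L$ to obtain an operator whose $\ell^1$- and $\ell^\infty$-operator norms are at most $nc$, then invoke Calder\'on--Mityagin interpolation for symmetric norms. The execution differs in two respects that make your argument more direct. First, you subtract the constant matrix $cJ$, which is symmetric and gives every absolute row sum (hence column sum) equal to $nc$; the paper instead subtracts $x_\infty \otimes {\bf 1}$ with $x_\infty(i)=\max_{j\neq i}L_{ij}$, a non-symmetric correction that requires separate row- and column-sum computations. Second, you work directly on $\mathfrak{X}_0$ via $J|_{\mathfrak{X}_0}=0$, whereas the paper passes to the adjoint $L^*\colon(\mathbb{R}^n,\|\cdot\|_*)\to(\mathbb{R}^n,\|\cdot\|_*)/\mathbb{R}$ and bounds the quotient norm by $\|\hat L^T v\|_*$; the duality detour becomes unnecessary once the rank-one correction is chosen to annihilate $\mathfrak{X}_0$ rather than to have range in $\mathbb{R}{\bf 1}$. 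Both routes are valid and rest on the same interpolation lemma, but yours is shorter.
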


 \begin{proof}
       Let us consider $L$ as a linear operator which maps the normed space $(\mathfrak{X}_0, \|\cdot\|)$ into $(\mathbb{R}^n, \|\cdot\|).$
       It is simple to see that the dual space is 
       $ (\mathfrak{X}_0, \|\cdot\|)^* = ( \mathbb{R}^n, \|\cdot\|_*)/\mathbb{R}, $ where $\|\cdot\|_*$ denotes the dual norm.
       Since $L1 = 0,$ the adjoint of $L$ is
       \begin{align*}
        L^* \colon (\mathbb{R}^n, \|\cdot \|_*) &\rightarrow ( \mathbb{R}^n, \|\cdot\|_*)/\mathbb{R}, \\ v &\mapsto Lv  + \mathbb{R}. 
       \end{align*}
  
       Let us associate a non-negative vector $x_\infty \in \mathbb{R}^n$ to $L$ defined by 
       $$ x_\infty(i) = \max_{1 \leq j, j \neq i \leq n} L_{ij}, \qquad 1 \leq i \leq n.$$ Set the matrix
       $$ \hat{L} = L - x_\infty \otimes 1.$$
       We claim that the estimates hold: 
       $$  \|\hat{L}\|_{1 \rightarrow 1} \leq n \max_{i \neq j} L_{ij} \quad \mbox{ and } \quad \|\hat{L}\|_{\infty \rightarrow \infty} \leq n \max_{i \neq j} L_{ij}.$$
       In fact, $L$ has zero row and column sums hence for all $1 \leq i \leq n$ we have
       \begin{align*}
          \sum_{j=1}^n |\hat{L}_{ij}| &= \Biggl(  x_\infty(i) + \sum_{1 \leq j, j \neq i \leq n} L_{ij}  \Biggr) + \sum_{1 \leq j , j \neq i \leq n} (x_\infty(i) - L_{ij}) \\
                                                  &= nx_\infty(i) \leq  n (\max_{i \neq j}  L_{ij}) .
       \end{align*}
       Similarly, for any $1 \leq j \leq n,$ 
        \begin{align*}
          \sum_{i=1}^n |\hat{L}_{ij}| &= \Biggl( x_\infty(j) +  \sum_{1 \leq i,i \neq j \leq n} L_{ij} \Biggr) + \sum_{1 \leq i, i \neq j \leq n} (x_\infty(i) - L_{ij}) \\
                                                  &= \sum_{i=1}^n x_\infty(i) \leq  n (\max_{i \neq j}  L_{ij}).
       \end{align*}
       This implies the claim.
       
       Since the dual norm $\|\cdot\|_*$ is symmetric, the Calder\'on-Mityagin interpolation readily gives
       that 
          \begin{align}
             \|\hat{L}^T v\|_* \leq n \max_{i \neq j} L_{ij} \|v\|_*, \qquad v \in \mathbb{R}^n.          
          \end{align}
       
       Pick a $v \in \mathbb{R}^n.$ Set $$ \lambda_v = \langle x_\infty, v \rangle. $$
       Then
       \begin{align*}
          \inf_{\lambda \in \mathbb{R}}  \|Lv  - \lambda 1\|_* \leq \|Lv  - \lambda_v 1\|_* &=  \|Lv  - \langle x_\infty, v\rangle 1 \|_*  \\
                                  &= \|\hat{L}^T v\|_*.
       \end{align*}
       But from (4.1) it now follows that  $$\|L^*\| \leq \|\hat{L}^T\|_{\|\;\|_* \rightarrow \|\;\|_*} \leq n \max_{i \neq j} L_{ij}.$$
       Since $\|L|\mathfrak{X}_0\| = \|L^*\|,$ the proof is complete.
    \end{proof}
    
 \begin{remark}
   Conditional properties of Hermitians have turned out to be particularly useful concepts.
   For instance, a theorem proved by Bhatia and Sano \cite{BaS} says that if $\varphi$ is an operator convex function then the related Loewner matrix 
   $$L_\varphi =\left[ \varphi(x_i) - \varphi(x_j) \over x_i - x_j \right]_{i,j},$$ 
   where $x_1, \hdots, x_n$ are distinct points in $(0, \infty),$ must be conditionally negative definite,
   i.e. it is negative definite on the subspace $\mathfrak{X}_0.$ We refer the reader to \cite{Bap}, \cite{H} for further interesting examples. 
 \end{remark}
    
 Let $L$ be an $n \times n$ Laplacian matrix. If $\varphi \colon \mathbb{R} \rightarrow \mathbb{R}$ is a concave function, then
 Jensen's inequality implies that $ L \varphi( f) \leq \varphi'(f)Lf $ holds for any $f \in \mathbb{R}^n.$ 
 On the other hand, if we consider the Dirichlet form given by $\mathcal{E}(f) = -\langle f, Lf \rangle, $ 
 one has with any Lipschitz function $\varphi$ that
 $$ \mathcal{E}(\varphi \circ f) \leq {\rm Lip}(\varphi)^2 \mathcal{E}(f),$$ 
 which is usually referred to as the Markovian property of $\mathcal{E},$ see e.g. \cite[p. 5]{Fuk},\cite[p. 42]{K}. 
 From now on Lip$(\varphi)$ denotes the Lipschitz constant of the function $\varphi.$
 Interestingly, if the above inequality holds only for monotone Lipschitz functions then it is valid for any Lipschitzian, see e.g. \cite[Proposition 2.1.3]{K}.
 Specifically, the variance $\mbox{Var}_\mu(x) = \mathbb{E}_\mu(|x-\mathbb{E}_\mu x|^2)$ of any vector $x$ (with respect to the probability measure $\mu$)
 satisfies the inequality 
  \begin{align*}
    \mbox{Var}_\mu(\varphi(x)) &\leq {\rm Lip}(\varphi)^2 \mbox{Var}_\mu(x). 
   \end{align*}
  
\begin{definition}
 If $\varphi \colon \mathbb{R} \rightarrow \mathbb{R}$ is a real function,  
 and  $x_1, \hdots, x_n$ are distinct points in $\mathbb{R},$ we define an $n \times n$ symmetric matrix $\Theta[\underline{x};\varphi]$ with zero row 
 and column sums by 
  $$\Theta_{ij}[x_1, \hdots, x_n;\varphi] = \begin{cases}
                              {\varphi(x_i) - \varphi(x_j) \over x_i - x_j} & \mbox{if } i \neq j \\
                               - \sum_{k : k \neq i } \Theta_{ik}[x_1, \hdots, x_n; \varphi]  &  \mbox{if } i  = j. 
                              \end{cases}
    $$
\end{definition}

 Given the uniform probability measure on the state space $\{1,\hdots, n\},$ the next lemma easily links the vector $\varphi(x) - \mathbb{E}\varphi(x)$ and the matrix $\Theta[\underline{x};\varphi].$

\begin{lemma}
  Let $x = (x_1, \dots, x_n) \in \mathbb{R}^n.$ Then
  $$ -{1 \over n} \Theta[\underline{x};\varphi]\left(x - {1 \over n} \sum_{j=1}^n x_j {\bf 1} \right) =\varphi(x) - {1 \over n} \sum_{j=1}^n\varphi(x_j) {\bf 1}.$$
\end{lemma}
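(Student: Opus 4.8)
The plan is to reduce the identity to an elementary entrywise computation, the only input being the zero row-sum structure built into $\Theta[\underline{x};\varphi]$. Write $\Theta = \Theta[\underline{x};\varphi]$ for brevity. First I would record that, by the definition of the diagonal entries, $\Theta$ has vanishing row sums, i.e. $\Theta {\bf 1} = 0$. Hence
$$ \Theta\left(x - \frac{1}{n}\sum_{j=1}^n x_j {\bf 1}\right) = \Theta x - \frac{1}{n}\left(\sum_{j=1}^n x_j\right)\Theta {\bf 1} = \Theta x, $$
so the mean-centering on the left-hand side is inert, and it suffices to prove that $-\frac{1}{n}\Theta x = \varphi(x) - \frac{1}{n}\sum_{j=1}^n \varphi(x_j){\bf 1}$, where $\varphi(x) = (\varphi(x_1),\dots,\varphi(x_n))$ is understood entrywise.

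Next I would compute the $i$-th coordinate of $\Theta x$ directly. Separating the diagonal term and inserting $\Theta_{ii} = -\sum_{k \neq i}\Theta_{ik}$ gives
$$ (\Theta x)_i = \Theta_{ii}x_i + \sum_{j \neq i}\Theta_{ij}x_j = \sum_{j \neq i}\Theta_{ij}(x_j - x_i). $$
The essential step is then the cancellation of the difference quotient against $(x_j - x_i)$: for $j \neq i$,
$$ \Theta_{ij}(x_j - x_i) = \frac{\varphi(x_i)-\varphi(x_j)}{x_i - x_j}\,(x_j - x_i) = \varphi(x_j) - \varphi(x_i). $$
This is exactly the point where the specific shape of the off-diagonal entries matters; after it the dependence on the denominators $x_i - x_j$ has disappeared, and with it the need for the $x_i$ to be distinct beyond the mere well-definedness of $\Theta$.

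Summing over $j \neq i$ (the $j = i$ term contributing nothing) then yields
$$ (\Theta x)_i = \sum_{j=1}^n \bigl(\varphi(x_j) - \varphi(x_i)\bigr) = \sum_{j=1}^n \varphi(x_j) - n\,\varphi(x_i), $$
so that $-\frac{1}{n}(\Theta x)_i = \varphi(x_i) - \frac{1}{n}\sum_{j=1}^n \varphi(x_j)$, which is precisely the $i$-th coordinate of the right-hand side. Since this holds for every $i$, the identity follows. I do not expect any genuine obstacle here: the computation is forced once one uses the zero row-sum definition of the diagonal, and the only place requiring a little care is the bookkeeping of the diagonal entry, which is what converts $\Theta x$ into a sum of telescoping difference quotients.
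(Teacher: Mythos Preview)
Your proof is correct and follows essentially the same route as the paper: compute the $k$-th coordinate of $-\tfrac{1}{n}\Theta[\underline{x};\varphi]\,x$ by cancelling the difference quotient against $(x_j-x_k)$, then invoke $\Theta[\underline{x};\varphi]\,{\bf 1}=0$ to dispose of the centering. The only difference is that you make the diagonal bookkeeping $(\Theta x)_i=\sum_{j\neq i}\Theta_{ij}(x_j-x_i)$ explicit, whereas the paper writes the sum over all indices in one line.
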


\begin{proof}
  Fix an index $1 \leq k \leq n.$ We get
  \begin{align*}
     \left[-{1 \over n} \Theta[\underline{x};\varphi] x \right]_k 
                                               =& -{1 \over n} \sum_{i = 1}^n {\varphi(x_k) -\varphi(x_i) \over x_k - x_i}  (x_i - x_k) \\
                                               =&  \varphi(x_k) - {1 \over n} \sum_{j=1}^n\varphi(x_j) {\bf 1}.
  \end{align*}
  Since $\Theta[\underline{x};\varphi] {\bf 1} = 0$, the proof is complete.
\end{proof}

 It is clear that if $\varphi \colon \mathbb{R} \rightarrow \mathbb{R}$ is a monotone increasing function
 then $\Theta[\underline{x};\varphi]$ is a Laplacian. Now a straightforward corollary of Theorem 2 is the following. 
    
 \begin{corollary}
     Let $\|\cdot\|$ denote any symmetric norm on $\mathbb{R}^n.$ 
     For any monotone Lipschitz function $\varphi \colon \mathbb{R} \rightarrow \mathbb{R},$
     distinct points $x_1, \hdots, x_n \in \mathbb{R}$ and $u \in \mathfrak{X}_0,$ we have
          $$ \|\Theta[x ;\varphi] u\| \leq n \; {\rm Lip}(\varphi) \|u\|.$$
 \end{corollary}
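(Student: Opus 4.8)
The plan is to read the Corollary as a direct specialization of Theorem 2, once I have identified the off-diagonal entries of $\Theta[x;\varphi]$ as Lipschitz difference quotients. I would first dispose of the case where $\varphi$ is monotone increasing. By the remark immediately preceding the statement, $\Theta[\underline{x};\varphi]$ is then a Laplacian: it is symmetric with zero row and column sums by construction, and for distinct $x_i$ its off-diagonal entries $\Theta_{ij}=\frac{\varphi(x_i)-\varphi(x_j)}{x_i-x_j}$ are nonnegative because numerator and denominator carry the same sign when $\varphi$ is increasing. This is precisely the hypothesis needed to invoke Theorem 2 with $u\in\mathfrak{X}_0$.

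The one computation to record is the estimate $\max_{i\neq j}\Theta_{ij}\leq {\rm Lip}(\varphi)$. This is immediate from the definition of the Lipschitz constant: for $i\neq j$ one has $|\varphi(x_i)-\varphi(x_j)|\leq {\rm Lip}(\varphi)\,|x_i-x_j|$, so that $0\leq\Theta_{ij}=\frac{\varphi(x_i)-\varphi(x_j)}{x_i-x_j}\leq {\rm Lip}(\varphi)$ for every off-diagonal pair. Feeding this bound into Theorem 2 yields, for increasing $\varphi$,
$$
\|\Theta[x;\varphi]u\|\leq n\Bigl(\max_{i\neq j}\Theta_{ij}\Bigr)\|u\|\leq n\,{\rm Lip}(\varphi)\,\|u\|,
$$
which is the desired inequality.

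It remains to cover monotone decreasing $\varphi$, which is where a small reduction is needed rather than a new argument. Here $\Theta[x;\varphi]$ fails to be a Laplacian (its off-diagonals are nonpositive), so I would pass to $-\varphi$, which is monotone increasing with ${\rm Lip}(-\varphi)={\rm Lip}(\varphi)$. From the defining formula the map $\varphi\mapsto\Theta[\underline{x};\varphi]$ is linear, whence $\Theta[x;-\varphi]=-\Theta[x;\varphi]$, and the absolute property of the symmetric norm gives $\|\Theta[x;\varphi]u\|=\|\Theta[x;-\varphi]u\|$. Applying the increasing case to $-\varphi$ then closes the argument with the same constant. I do not anticipate a serious obstacle: the substance is entirely contained in Theorem 2, and the only genuine points to verify are the sign of the difference quotients (so that $\Theta$ is a Laplacian) and the Lipschitz bound on $\max_{i\neq j}\Theta_{ij}$; the decreasing case is a one-line symmetry reduction.
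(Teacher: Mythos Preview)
Your proposal is correct and matches the paper's intended argument: the paper simply declares the result ``a straightforward corollary of Theorem~2'' after noting that $\Theta[\underline{x};\varphi]$ is a Laplacian when $\varphi$ is monotone increasing, so the substance is exactly the identification $\max_{i\neq j}\Theta_{ij}\leq{\rm Lip}(\varphi)$ that you spell out. Your explicit reduction of the decreasing case via $\varphi\mapsto-\varphi$ is a detail the paper leaves implicit, but it is the natural one-line fix and introduces nothing new.
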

 
 From Lemma 4 we arrive at the next result. 
    
 \begin{corollary}
     Let $\|\cdot\|$ be a symmetric norm on $\mathbb{R}^n.$ 
     For any monotone Lipschitz function $\varphi \colon \mathbb{R} \rightarrow \mathbb{R}$
     and $f = (f_1, \hdots, f_n) \in \mathbb{R}^n,$
     we have
          $$ \Bigl\|\varphi (f) - {1 \over n} \sum_{i=1}^n \varphi(f_i) {\bf 1} \Bigr\| \leq
           {\rm Lip}(\varphi) \Bigl\|f - {1 \over n} \sum_{i=1}^n f_i {\bf 1} \Bigr\| .$$
 \end{corollary}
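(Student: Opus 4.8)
The plan is to read the corollary off directly from Lemma 4 and Corollary 1. Write
$$u := f - {1 \over n}\sum_{j=1}^n f_j {\bf 1},$$
and note that $u \in \mathfrak{X}_0$, since its coordinates sum to zero. Lemma 4, applied with $x = f$, is precisely the identity
$$\varphi(f) - {1 \over n}\sum_{j=1}^n \varphi(f_j){\bf 1} = -{1 \over n}\,\Theta[\underline{f};\varphi]\,u.$$
Taking the symmetric norm of both sides and invoking Corollary 1 for the vector $u \in \mathfrak{X}_0$ then gives
$$\Bigl\|\varphi(f) - {1 \over n}\sum_{j=1}^n \varphi(f_j){\bf 1}\Bigr\| = {1 \over n}\bigl\|\Theta[\underline{f};\varphi]\,u\bigr\| \leq {1 \over n}\cdot n\,{\rm Lip}(\varphi)\,\|u\| = {\rm Lip}(\varphi)\,\|u\|,$$
which is the asserted inequality once we substitute back the definition of $u$.

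Thus the entire content of the statement is packaged into the two preceding results: the algebraic identity of Lemma 4 and the operator-norm bound of Corollary 1. It is worth recalling why Corollary 1 applies: since $\varphi$ is monotone increasing, each off-diagonal entry ${\varphi(f_i)-\varphi(f_j) \over f_i - f_j}$ of $\Theta[\underline{f};\varphi]$ is non-negative, so $\Theta[\underline{f};\varphi]$ is a genuine Laplacian; and since $\varphi$ is Lipschitz, these entries are bounded above by ${\rm Lip}(\varphi)$, whence $n\max_{i\neq j}\Theta_{ij} \leq n\,{\rm Lip}(\varphi)$, which is exactly the constant furnished by Theorem 2.

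The only point requiring a little care is that Lemma 4 and Corollary 1 presuppose distinct points $f_1,\dots,f_n$, so that $\Theta[\underline{f};\varphi]$ is well defined. To cover an arbitrary $f \in \mathbb{R}^n$ I would argue by continuity: choose vectors $f^{(m)}$ with pairwise distinct coordinates and $f^{(m)}\to f$, apply the inequality to each $f^{(m)}$, and pass to the limit. Both sides of the final inequality depend continuously on $f$ — the left-hand side through the continuity of $\varphi$ and of the norm, the right-hand side likewise — while ${\rm Lip}(\varphi)$ stays fixed, so the bound survives the limit. I do not anticipate any genuine obstacle beyond this routine reduction and the bookkeeping with the factor $1/n$.
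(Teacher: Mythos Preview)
Your proof is correct and follows exactly the route the paper intends: combine the identity of Lemma~4 with the operator bound of Corollary~1 (itself a consequence of Theorem~2), and handle repeated coordinates by a continuity argument. The only cosmetic remark is that your recap of Corollary~1 treats the monotone increasing case; the decreasing case is immediate since $\Theta[\underline{f};-\varphi]=-\Theta[\underline{f};\varphi]$ and the norm is unaffected by the sign.
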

 
  Now by means of the approximation described in the proof of Theorem 1, we get the following. 
  
 \begin{theorem}
     Let $(\Omega, \mathcal{F}, \mu)$ be a probability space and fix $1 \leq p \leq \infty.$ 
     For any monotone Lipschitz function $\varphi \colon \mathbb{R} \rightarrow \mathbb{R}$ and real
     $f \in L^\infty(\Omega, \mu),$
     we have
          $$ \|\varphi(f) - \mathbb{E}\varphi(f)\|_p \leq {\rm Lip}(\varphi) \|f  - \mathbb{E}f\|_p .$$
 \end{theorem}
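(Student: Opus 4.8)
The plan is to lift the finite-dimensional estimate of Corollary 2 to a general probability space by repeating, almost verbatim, the two-stage approximation from the proof of Theorem 1. The key preliminary remark is that the $\ell_p$ norm $\|\cdot\|_p = (\sum_i |\cdot|^p)^{1/p}$ is symmetric on every $\mathbb{R}^m$, so Corollary 2 is available for it. I first settle the case where $\mu = \lambda$ is the uniform measure on $m$ points: here $\mathbb{E}_\lambda \psi = \frac{1}{m}\sum_{i=1}^m \psi_i$ and $\|\psi\|_{\ell^p(\lambda)} = m^{-1/p}\|\psi\|_p$, so the factor $m^{-1/p}$ cancels from both sides and the desired inequality becomes precisely Corollary 2 applied to $\|\cdot\| = \|\cdot\|_p$. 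Since both Corollary 2 and the present theorem hypothesise a monotone (increasing or decreasing) Lipschitz $\varphi$, nothing extra is needed here.

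Next I upgrade to a finite state space $S_n$ carrying rational weights $(\mu_\mathbb{Q})_i = r_i/m$, via the algebra homomorphism $\Phi\colon \mathbb{R}^n \to \mathbb{R}^m$ of Theorem 1's proof, which repeats the $i$-th coordinate $r_i$ times. Because $\Phi$ acts purely by duplication it commutes with the pointwise action of $\varphi$, that is $\Phi(\varphi(x)) = \varphi(\Phi(x))$, and it intertwines the expectations, $\mathbb{E}_{\mu_\mathbb{Q}}\varphi(x) = \mathbb{E}_\lambda \varphi(\Phi(x))$; here it is essential that Corollary 2 carries no distinctness restriction on its argument, since $\Phi(x)$ typically has repeated entries. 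Combined with the norm identity $\|y\|_{\ell^p(\mu_\mathbb{Q})} = \|\Phi(y)\|_{\ell^p(\lambda)}$, these facts reduce the inequality for $\mu_\mathbb{Q}$ to the uniform case just settled. An arbitrary (possibly irrational) probability vector on $S_n$ is then obtained as a limit of rational ones; with the finitely many values of the simple function held fixed, both sides depend continuously on the weights while ${\rm Lip}(\varphi)$ does not move, so the bound survives the limit.

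Finally I pass from a finite state space to a general $(\Omega, \mathcal{F}, \mu)$. Since $f \in L^\infty$, choose simple functions $f_k \to f$ uniformly; the Lipschitz bound forces $\varphi(f_k) \to \varphi(f)$ uniformly, hence in every $L^p(\mu)$, whence $\mathbb{E}\varphi(f_k) \to \mathbb{E}\varphi(f)$ and $\mathbb{E} f_k \to \mathbb{E} f$. Applying the finite-space estimate to each $f_k$ and letting $k \to \infty$ then yields the theorem. I anticipate no serious obstacle, as all the analytic content already resides in Corollary 2 (and, beneath it, in Theorem 2 and the Calder\'on--Mityagin interpolation). The only points demanding genuine care are the two structural identities $\Phi \circ \varphi = \varphi \circ \Phi$ and $\mathbb{E}_{\mu_\mathbb{Q}}\varphi = \mathbb{E}_\lambda(\varphi \circ \Phi)$, both transparent because $\Phi$ merely repeats coordinates, together with the routine check that monotonicity and the Lipschitz constant of $\varphi$ are inherited unchanged through every approximation step.
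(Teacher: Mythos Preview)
Your proposal is correct and follows essentially the same route as the paper: the paper's proof of Theorem 3 is the single sentence ``by means of the approximation described in the proof of Theorem 1,'' and what you have written is precisely a careful unpacking of that approximation (uniform case via Corollary 2, rational weights via the coordinate-repeating homomorphism $\Phi$, general finite weights by continuity, then simple-function approximation). Your remark that Corollary 2 must be applied to $\Phi(x)$, whose entries are typically repeated, is a fair point of care; the paper states Corollary 2 without any distinctness hypothesis, and indeed the inequality extends from distinct to arbitrary $f$ by a trivial continuity argument.
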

     
  \begin{remark}
    In general, there is no distinction if we have a real Lipschitz function given on the real line or a subset of $\mathbb{R}.$ 
    Applying Zorn's lemma we may find an extension from the smaller set to $\mathbb{R}$ with the same Lipschitz number, see \cite[Theorem 1.5.6]{W}.
  \end{remark}
         
  Unfortunately, the next examples show that one cannot remove the restriction to monotone functions in the previous theorem.
  
\begin{example}
  {\rm Let us consider the probability vector $\mu = (1/36, 3/4, 2/9),$ and let $f = (-0.3, 0.28, 0.38).$
     The function $\varphi,$ defined on sufficiently small open neighborhoods of the components of $f,$ is the map $x \mapsto x^{-1},$ and take any of its extension to $\mathbb{R}$  with the same Lipshitz number.
     Specifically, let $\ell^1_\mu$ be the $\ell^1$-space of functions on a $3$-points space with the weight function $\mu.$
     Then we get $$ \|\varphi(f) - \mathbb{E}_\mu \varphi(f)\|_{\ell^1(\mu)} = \|f^{-1} - \mathbb{E}_\mu f^{-1}\|_{\ell^1(\mu)} = 0.57783, $$ whilst
     $$  \|f^{-1}\|_\infty^2 \|f - \mathbb{E}_\mu f\|_{\ell^1(\mu)}= 0.5417. $$
  }
\end{example}

 Another but piecewise linear example shows the failure of the Markov property in weighted $\ell^1$ spaces.

\begin{example}
    {\rm Let us consider the probability vector $\mu = (1/6, 9/12, 1/12 ).$ 
    Define $\varphi$ on the real line by
     $$ \varphi(x) = \begin{cases}
                    -\left(x + {11 \over 15}\right) &\mbox{ if } x \leq {1 \over 15} \\
                    {3 \over 5}x-{21 \over 25}   &\mbox{ if } x \geq  {1 \over 15}.
                  \end{cases}
     $$
     Clearly, ${\rm Lip}(\varphi)=1.$ Set $f = (-11/15,1/15, 13/15).$ A straightforward calculation
     gives that 
      $$ \|f - \mathbb{E}_\mu f\|_{\ell^1(\mu)} = 0.244... $$ and
     $$  \|\varphi(f) - \mathbb{E}_\mu \varphi(f)\|_{\ell^1(\mu)} = 0.26 .$$
     
     On the other hand, we stress that if $\varphi$ is the square function (restricted to the range of $f$) then
 $$ \|f^2 -  \mathbb{E}f^2\|_p \leq 2 \|f\|_\infty \|f-\mathbb{E}f\|_p,$$ for all $1 \leq p \leq \infty.$

     } 
\end{example}

    Our numerical experiments lead us to suspect that if $2 \leq p \leq \infty$ then one may remove the monotonicity assumption in Theorem 3, but we shall leave open this question.

\begin{remark} In \cite{R2} the strong Leibniz inequality related to the standard deviation was studied in the commutative
and non-commutative context as well. We recall that a seminorm $L$ defined on a unital normed algebra $(\mathcal{A}, \|\cdot\|, {\bf 1}_\mathcal{A})$ 
is strongly Leibniz if it satisfies the Leibniz inequality and $L(a^{-1}) \leq \|a^{-1}\|^2 L(a)$ whenever  $a \in \mathcal{A}$ is invertible.

Now if we choose the algebra $L^\infty(\Omega, \mu)$ over a probability space $(\Omega, \mathcal{F}, \mu)$ 
and define the seminorm $L(f) = \|f - \mathbb{E}f\|_p,$ $1 \leq p \leq \infty,$ the Leibniz inequality is satisfied with real functions.
This was completely established in \cite{L} and the result is included in Theorem 1 in the end-point case $p_1 = p_2 = \infty.$
Unfortunately, the strong Leibniz property, in general, fails in probability spaces, see Example 1 if $p=1$.
However, our numerical simulations suggest that if $2 \leq p \leq \infty$ the strong property may hold as well.

\end{remark}

\end{document}